\documentclass[11pt]{amsart}
\usepackage{amssymb}
\usepackage{amscd}
\usepackage{verbatim}
\usepackage{amssymb,amsfonts,pstricks,epsf}
\usepackage{graphicx}
\usepackage{epsfig}
\usepackage{color}
\usepackage{stmaryrd}
\usepackage{mathrsfs}
\usepackage{marginnote}

\newtheorem{theorem}{Theorem}[section]

\newtheorem{proposition}[theorem]{Proposition}
\newtheorem{corollary}[theorem]{Corollary}

\definecolor{plum}{rgb}{1.0, 0.0, 1.0}

\DeclareMathOperator{\csch}{csch}

\begin{document}

\title{Torsional rigidity and isospectral planar sets}

\author{Joseph Comer and Patrick McDonald}

\address{Hughes Research Laboratory}

\email{joseph.comer@ncf.edu}

\address{Division of Natural Science, New College of Florida, Sarasota, FL 34243}

\email{mcdonald@ncf.edu}

\date{\today}

\begin{abstract}
We prove that a certain pair of isospectral planar sets are distinguished by torsional rigidity.
\end{abstract}

\keywords{torsional rigidity, heat content, Dirichlet Problem}

\subjclass[2010]{58J50, 58J35, 35P10}

\maketitle

\section{Introduction}

Let $\Omega\subset {\mathbb R}^2$ be a planar domain with compact closure and piecewise smooth boundary.  Let $\Delta$ be the Laplace operator and suppose $u:\Omega \to {\mathbb R}$ satisfies 
\begin{eqnarray*}
\Delta u &= &-1 \hbox{ on } \Omega \\
u &=& 0 \hbox{ on } \textup{boundary}(\Omega).
\end{eqnarray*}
The {\it torsional rigidity} of $\Omega$ is the real number $T(\Omega)$ defined by 
\begin{equation}\label{Eqn:TorsionalRigidityDef}
T(\Omega)  =  \int_{\Omega} u(x,y) dxdy
\end{equation}
where $dxdy$ is the usual area form.  Originally defined in the context of the theory of elastic bodies where it is used to quantify the torque required to produce a given amount of twist in a homogeneous cylinder of unit height and cross-section $\Omega,$ torsional rigidity played a fundmanetal role in engineering mechanics throughout the nineteenth and twentieth centuries. Important early work of de Saint-Venant, in particular his 1856 formulation of the torsion problem, served to broaden interest in torsional rigidity as a topic for geometric analysis, broadly construed.  The first rigorous solution of the torsion problem appears in a 1948 paper of P\'olya \cite{Po1} where he uses symmetrization techniques to simultaneously address the torsion problem and the Rayleigh conjecture, thus drawing a connection between geometric and analytic properties of torsional rigidity and the principal Dirichlet eigenvalue.  P\'olya and Szeg\"o \cite{PS1} further developed these connections, establishing inequalities involving area, torsional rigidity, and the principal Dirichlet eigenvalue (denoted by $\lambda_1(\Omega)$) that include
\[
T(\Omega)\lambda_1(\Omega) \leq \textup{Area}(\Omega).
\]
The work of P\'olya and Szeg\"o spurred interest in elucidating the relationship between torsional rigidity, Dirichlet spectrum and the geometry of a given domain, and the associated literature now fills many journal pages.  Our result adds to this literature.  We prove:

\begin{theorem}\label{Thm:DifferentTorsion}There are piecewise smoothly bounded open planar sets $C_1$ and $C_2$ that are Dirichlet isospectral, but for which $T(C_1) \neq T(C_2).$
\end{theorem}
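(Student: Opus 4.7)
My plan is to take an explicit isospectral pair $(C_1, C_2)$ of Gordon--Webb--Wolpert (GWW) type, built from seven congruent copies of a fixed right triangle $\tau$ glued along edges by reflection, and to show by direct calculation that the two tilings have unequal torsional rigidities.

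I would first recast the torsional rigidity in spectral form. Expanding the torsion function $u$ in an $L^2$ orthonormal basis $\{\phi_n\}$ of Dirichlet eigenfunctions with eigenvalues $\lambda_n$, and writing $c_n = \int_\Omega \phi_n\,dA$, one obtains
\[
T(\Omega) \;=\; \sum_{n \geq 1} \frac{c_n^2}{\lambda_n} \;=\; \int_0^\infty H_\Omega(t)\,dt,
\]
where $H_\Omega(t)$ denotes the Dirichlet heat content of $\Omega$ with initial data $1$. This identity shows that $T(\Omega)$ depends on the projections of the constant function onto each eigenspace, not just on the eigenvalues themselves, and so in principle torsional rigidity can separate isospectral domains.

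Next, I would work with the torsion function on the tiling directly. Let $u_i$ be the torsion function on $C_i$. On each copy of $\tau$ inside $C_i$, $u_i$ satisfies $\Delta u_i = -1$, vanishes on the edges that lie in $\partial C_i$, and joins smoothly with its counterpart on the adjacent triangle across each interior edge. Using the explicit series solution of the torsion equation on a right triangle with mixed Dirichlet data on the three sides, the problem reduces to a finite linear system whose unknowns are the traces of $u_i$ along the interior edges and whose coefficients are determined by the combinatorial incidence pattern of the GWW gluing.

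The main obstacle is to show that the two linear systems yield different values for $T(C_i) = \sum_{\tau \subset C_i}\int_\tau u_i$. The Buser-type transplantation that matches Dirichlet eigenfunctions on $C_1$ and $C_2$ with a common eigenvalue is a piecewise orthogonal rearrangement of the seven triangular pieces and does not in general fix the constant function $1$; consequently the coefficients $c_n$ differ between the two domains. I expect the cleanest comparison to isolate a small number of \emph{interaction integrals} of an auxiliary torsion function against prescribed edge data on $\tau$, entering the two tilings with different multiplicities, so that the difference $T(C_1) - T(C_2)$ reduces to a single computable quantity. A supplementary numerical evaluation of both linear systems would in any event provide unambiguous confirmation of the inequality and fix its sign.
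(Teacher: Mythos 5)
Your spectral identity $T(\Omega)=\sum_n c_n^2/\lambda_n=\int_0^\infty H_\Omega(t)\,dt$ is correct and is the same starting point as the paper, but your choice of example is fatally flawed. The Gordon--Webb--Wolpert pair arises from the Sunada construction, and by Gilkey's observation (cited in this paper) any Sunada-isospectral pair has \emph{identical} heat content: the transplantation is compatible with the constant initial data, so even though individual projections $c_n$ may be rearranged among eigenfunctions with the same eigenvalue, the sums $\sum_n c_n^2 e^{-\lambda_n t}$, and hence $H_{C_1}(t)=H_{C_2}(t)$ for all $t$, coincide. Since torsional rigidity is the first moment of heat content, this forces $T(C_1)=T(C_2)$ for the GWW drums. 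Thus the quantity you propose to show is nonzero is identically zero; your heuristic that the transplantation ``does not in general fix the constant function $1$'' and that therefore the $c_n$ differ does not survive this structural obstruction, and no amount of edge-trace linear algebra or numerical evaluation will produce a nonzero difference.

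To repair the argument you must use an isospectral pair that does \emph{not} come from a Sunada/transplantation construction in the heat-content-preserving sense. This is exactly why the paper works with Chapman's disconnected pairs: $C_1$ is the disjoint union of a unit square and a right isosceles triangle of leg $2$, and $C_2$ is the disjoint union of a $2\times 1$ rectangle and a right isosceles triangle of leg $\sqrt{2}$. For these pieces the Dirichlet eigenfunctions are explicit trigonometric polynomials, so $T$ of each piece is an explicit series (Propositions \ref{Thm:FirstMomentR} and \ref{Thm:TRIsosceles}), and the difference $T(C_1)-T(C_2)$ can be estimated rigorously and shown to be negative; no numerics are needed. A secondary issue with your plan, even granting a suitable example, is that ``a supplementary numerical evaluation'' of a linear system does not constitute a proof unless accompanied by rigorous error bounds, whereas the paper's series admit elementary monotonicity bounds (e.g.\ $\coth(x)>1$) that settle the sign analytically.
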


The sets we study were first constructed by Chapman \cite{Ch1}, who established their isospectrality.  In \cite{BDK}, van den Berg, Dryden and Kappeler prove that these sets have different heat content.  Because torsional rigidity is the first moment of heat content (see, for example, \cite{MM1}), we recover their result for the Chapman pairs in question.  

The Chapman domains we study are disjoint unions of planar polygons and it is natural to question whether there are examples of isospectral pairs of connected open sets that are distinguished by torsional rigidity.  It is an observation of Gilkey \cite{Gi1} that isospectral pairs that arise via a Sunada construction must have identical heat content.  Our observation that torsional rigidity can be identified with the first moment of heat content then implies that torsional rigidity can not distinguish such isospectral pairs.  We know of no other connected, isospectral, non-isometric pairs of planar sets.  There are examples of isospectral pairs of connected combinatorial graphs distinguished by torsional rigidity \cite{MM2} and examples of isospectral pairs of connected quantum graphs distinguished by torsional rigidity \cite{CKM}. 

The proof of Theorem \ref{Thm:DifferentTorsion} involves an explicit calculation of the torsional rigidity for the domains in question.  In the section that follows we fix notation, develop the machinery required for our calculations, and compute torsional rigidity for rectangles and right isosceles triangles.  While formulae for torsional rigidity involving rectangles and certain triangles can be found in the literature (see for example \cite{TG1} and references therein), choices involving normalizing constants and differences in terminology suggest that there is value in making the presentation as self-contained as possible.  In the final section of the paper we formally define the Chapman pairs referenced in our theorem (see figure \ref{Fig:Chapman}) and provide the computation required for the proof.

\section{Torsional rigidity for rectangles and right isosceles triangles}      

Let $\Omega \subset {\mathbb  R}^2$ be a rectangle of length $L$ and height $H.$  Let $\Delta = \frac{\partial^2}{\partial x^2} + \frac{\partial^2}{\partial y^2}$ be the Laplace operator and recall that the Dirichlet spectrum is the discrete collection of real numbers $\lambda$ for which there is a nontrivial solution of 
\begin{eqnarray*}
\Delta u +\lambda u & = & 0 \hbox{ on } \Omega   \\
u & = & 0 \hbox{ on } \partial \Omega. 
\end{eqnarray*}
For the rectangle $\Omega,$ the Dirichlet spectrum of $\Omega$ is parametrized by pairs of positive integers:
\[
\lambda_{j,k} = \pi^2\left(\left(\frac{j}{L}\right)^2 +\left(\frac{k}{H}\right)^2 \right).
\]
The $L^2$-normalized eigenfunction associated to $\lambda_{j,k}$ is 
\[
\phi_{j,k}(x,y) = \frac{2}{\sqrt{LH}} \sin\left(\frac{j\pi x}{L}\right) \sin\left(\frac{k\pi y}{H}\right).
\]
The Green's function associated to $\Omega$ can be written formally in terms of the spectral data:
\begin{equation}\label{Eqn:GF}
G(x,y,x^\prime,y^\prime) = \sum_{j, k \geq 1}^\infty \frac{1}{\lambda_{j,k}}\phi_{j,k}(x,y)\phi_{j,k}(x^\prime,y^\prime).
\end{equation}
The solution of the Poisson problem 
\begin{eqnarray*}
\Delta u & = & -1 \hbox{ on } \Omega   \\
u & = & 0 \hbox{ on } \partial \Omega 
\end{eqnarray*}
can be written in terms of the Green's function:
\[
u(x,y) = \int_\Omega G(x,y,x^\prime,y^\prime) dx^\prime dy^\prime.
\]
From (\ref{Eqn:GF}) it follows that the torsional rigidity of $\Omega$ can be expressed as 
\begin{equation}\label{Eqn:TR2}
T(\Omega) = \sum_{j,k \geq 1} a_{j,k}^2 \frac{1}{\lambda_{j,k}}
\end{equation}
where the constants $a_{j,k}$ are obtained by integrating the normalized eigenfunctions:
\[
a_{j,k} = \int_\Omega \phi_{j,k}(x,y) dx dy.
\]

We can compute the coefficients occuring (\ref{Eqn:TR2}):
\[
a_{j,k} = \left\{ \begin{array}{ll}
                 4 \frac{2}{\sqrt{LH}} \frac{LH}{\pi^2 jk} & \hbox{ if $j$ and  $k$ are both odd} \\
                 0 & \hbox{ else} \end{array} \right. 
\]
and thus (see page 108 of \cite{PS1}):
\[
T(\Omega) = \frac{4^3 HL}{\pi^{6}}\sum_{j,k \textup{ odd}} \frac{1}{j^2k^2} \left(\frac{1}{\left(\frac{j}{L}\right)^2 +\left(\frac{k}{H}\right)^2}\right).
\]
Setting $x= \frac{H}{Lk}$ gives
\[
T(\Omega) = \frac{4^3 H^{3}L}{\pi^{6}}\sum_{k \textup{ odd}} \frac{1}{k^{4}}\sum_{j \textup{ odd}} \frac{1}{j^2}\frac{1}{1+j^2 x^2}.
\]
There is a partial fraction decomposition:
\[
T(\Omega) =  \frac{4^3 H^{3}L}{\pi^{6}}\sum_{k \textup{ odd}} \frac{1}{k^{4}}\left( \sum_{j \textup{ odd}} \frac{1}{j^2} -x^2\frac{1}{1+j^2x^2}  \right).
\]

Note that 
\begin{eqnarray*}
\sum_{j \textup{ odd}} \frac{1}{j^2} & = & \sum_{j \geq 1} \frac{1}{j^2} - \sum_{j \textup{ even}} \frac{1}{j^2} \\
& = & \left(1 - \frac{1}{2^2}\right)\zeta(2)
\end{eqnarray*}
where $\zeta(s)$ denotes the Riemann zeta function evaluated at $s.$  Setting 
\[
Z_r = \left(1-\frac{1}{2^{4}}\right)\left(1 - \frac{1}{2^2}\right)\zeta(4)\zeta(2) = \left(\frac{1}{12}\right) \frac{\pi^6}{ 2^6},
\]
we have 

\begin{equation}\label{Eqn:TnRep3}
T(\Omega) =  \frac{4^3 H^{3}L}{\pi^{6}}\left(Z_r - \left(\frac{H}{L}\right)^2 \sum_{k \textup{ odd}} \frac{1}{k^{6}}\left( \sum_{j \textup{ odd}} \frac{1}{1+j^2x^2}  \right) \right).
\end{equation}

Using contour integration it is an easy exercise to find a closed form for the series involving the index $j:$  

\[
\sum_{j \geq 1} \frac{1}{1+j^2x^2} = -\frac12 +  \frac{\pi \coth\left(\frac{\pi}{x}\right)}{2x}.
\]

Setting $\beta = \pi \frac{H}{L}$ and $\gamma=\pi \frac{L}{H},$ we have 
\begin{eqnarray*}
\sum_{j \textup{ odd}} \frac{1}{1+j^2x^2} & = & \frac{\pi \coth\left(\frac{\pi}{x}\right)}{2x} - \frac{\pi \coth\left(\frac{\pi}{2x}\right)}{4x} \\
 & = & \frac{\pi}{2x}\left(\coth(k\gamma) - \frac{\coth\left(\frac{k\gamma}{2}\right)}{2}\right).
\end{eqnarray*}
Using the identity $\coth(\frac{\theta}{2}) = \coth(\theta) + \csch(\theta)$ we have
\[
\sum_{j \textup{ odd}} \frac{1}{1+j^2x^2} = \frac{\pi}{2x}\frac12 \left(\coth(k\gamma) - \csch(k\gamma) \right) .
\]
Finally, the identity $\tanh(\frac{\theta}{2}) = \coth(\theta) - \csch(\theta)$ gives
\begin{equation}\label{Eqn:jsum2}
\sum_{j \textup{ odd}} \frac{1}{1+j^2x^2} = \frac{\pi}{4}\left(\frac{Lk}{H}\right) \tanh\left(\frac{k\gamma}{2}\right) .
\end{equation}
Combining (\ref{Eqn:TnRep3}) and (\ref{Eqn:jsum2}) establishes

\begin{proposition}\label{Thm:FirstMomentR}Suppose $\Omega$ is a rectangle of length $L$ and height $H.$ Let $\beta = \pi \frac{H}{L}$ and let $\gamma=\pi \frac{L}{H}.$  Then 
\begin{equation}\label{Eqn:FirstMoment}
T(\Omega) = \frac{4^3H^3L}{\pi^{6}}\left(Z_r - \frac{1}{4} \beta \sum_{k \textup{ odd}} \frac{1}{k^5} \tanh\left(\frac{k\gamma}{2}\right) \right)
\end{equation}
where
\[
Z_r = \left(\frac{1}{12}\right) \frac{\pi^6}{ 2^6}.
\]
\end{proposition}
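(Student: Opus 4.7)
The plan is to start from the spectral representation of the Green's function for a rectangle, reduce $T(\Omega)$ to a double series indexed by pairs of positive integers $(j,k)$, collapse the $j$-sum to a closed form using a classical contour-integration identity, and finally use hyperbolic identities to put the answer in the stated $\tanh$ form.

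First I would use the explicit product-of-sines Dirichlet eigenbasis on $\Omega$ and the corresponding spectral expansion of the Green's function stated in (\ref{Eqn:GF}) to write $T(\Omega) = \sum_{j,k\geq 1} a_{j,k}^2/\lambda_{j,k}$, where $a_{j,k}$ is the integral of $\phi_{j,k}$. A short computation using $\int_0^L \sin(j\pi x/L)\,dx = \frac{L}{j\pi}(1-\cos(j\pi))$ shows that $a_{j,k}$ vanishes unless both indices are odd, giving the double sum over odd $j,k$ displayed just before equation (\ref{Eqn:TR2}). Substituting $x = H/(Lk)$ and using the elementary partial-fraction identity
\[
\frac{1}{j^2(1+j^2x^2)} \;=\; \frac{1}{j^2} - \frac{x^2}{1+j^2x^2}
\]
separates the leading constant piece from the analytically nontrivial tail. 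The constant piece factors as a product of zeta-type sums over odd indices, each evaluated as $(1-2^{-s})\zeta(s)$ for $s=2,4$, yielding exactly the constant $Z_r$ in the statement.

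The key analytic step is to evaluate $\sum_{j\textup{ odd}} \frac{1}{1+j^2x^2}$ in closed form. I would begin with the classical identity
\[
\sum_{j\geq 1}\frac{1}{1+j^2x^2} \;=\; -\frac{1}{2} + \frac{\pi\coth(\pi/x)}{2x},
\]
which follows from residues of $\pi\cot(\pi z)/(1+z^2x^2)$ (equivalently, Mittag--Leffler for $\coth$). The odd part is extracted by the standard doubling trick
\[
\sum_{j\textup{ odd}}\frac{1}{1+j^2x^2} \;=\; \sum_{j\geq 1}\frac{1}{1+j^2x^2} - \sum_{j\geq 1}\frac{1}{1+(2j)^2x^2},
\]
which rewrites the second sum in terms of $\coth(\pi/(2x))$. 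Applying the hyperbolic identities $\coth(\theta/2) = \coth(\theta) + \csch(\theta)$ and $\tanh(\theta/2) = \coth(\theta) - \csch(\theta)$ in sequence collapses the difference into a single $\tanh$, producing equation (\ref{Eqn:jsum2}). Inserting this into the remaining $k$-sum and plugging back in produces the formula in (\ref{Eqn:FirstMoment}).

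The hardest part is not the analysis, which is classical, but the bookkeeping: tracking the $H/L$ versus $L/H$ prefactors through the substitution $x = H/(Lk)$, keeping the parities correct after the doubling trick, and verifying that the normalizing constant emerges as $Z_r = \frac{1}{12}\cdot\frac{\pi^6}{2^6}$ rather than any of its visually similar cousins. Once these are handled with care, the $\beta$ and $\gamma$ in the statement appear naturally from $\pi H/L$ and $\pi L/H$, and the proposition follows.
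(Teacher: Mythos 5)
Your proposal is correct and follows essentially the same route as the paper: the spectral expansion of the Green's function, vanishing of $a_{j,k}$ unless both indices are odd, the substitution $x = H/(Lk)$ with the partial-fraction split, the residue identity for $\sum_{j\geq 1}(1+j^2x^2)^{-1}$, the even-index subtraction to isolate odd $j$, and the two hyperbolic half-angle identities to reach the $\tanh$ form. No gaps; the constant $Z_r=\left(1-\tfrac{1}{2^4}\right)\left(1-\tfrac{1}{2^2}\right)\zeta(4)\zeta(2)=\tfrac{1}{12}\tfrac{\pi^6}{2^6}$ emerges exactly as you describe.
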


As an aside, series of the form appearing in the proof of Proposition \ref{Thm:FirstMomentR} occur in a variety of mathematical context and are much-studied.  For our purposes, invariance of torsional rigidity under the exchange of length and height leads to a rational expression in $L$ and $H$ for the expression 
\[
\beta^2 \sum_{k \textup{ odd}} \frac{1}{k^5} \tanh\left(\frac{k\gamma}{2}\right) - \gamma^2 \sum_{k \textup{ odd}} \frac{1}{k^5} \tanh\left(\frac{k\beta}{2}\right). 
\]
This observation, first made by de Saint-Venant \cite{SV1}, is a special case of a general phenomena best understood from the viewpoint of modular forms (see \cite{Be1} and references therein).

The techniques used to establish Proposition \ref{Thm:FirstMomentR} can be used to derive expressions for the torsional rigidity for right isosceles triangles\footnote{This is true for any polygon for which a complete collection of eigenfunctions consisting of trigonometric polynomials exists.  Such polygons have been classified by McCartin \cite{Mc1}.}.  To see that this is the case, suppose $\Omega$ is a right isosceles triangle of base length $L.$  Then, using reflection and transplantation, the Dirichlet eigenvalues of $\Omega $ are given by
\[
\lambda_{j,k} = \frac{\pi^2}{L^2}(j^2 + k^2) \hbox{ where } j < k.
\]
Assuming $\Omega$ is parametrized by $0\leq x\leq L,$ $0\leq y \leq L-x,$ the associated normalized eigenfunctions are\footnote{The spectral resolution for the Dirichlet Laplacian for right isosceles triangles can be found in \cite{P1}; see also the discussion in \cite{Mc1}.}   
\small
\[
\phi_{j,k}(x,y)  = \frac{2}{L}\left(\sin\left(\frac{j\pi x}{L}\right)\sin\left(\frac{k\pi y}{L}\right) +(-1)^{j+k+1}\sin\left(\frac{k\pi x}{L}\right)\sin\left(\frac{j\pi y}{L}\right)\right).
\]
\normalsize
As in the case of rectangles, the torsional rigidity of $\Omega$ is given by integrating the Green's function:
\[
T(\Omega) = \sum_{k\geq 1} \sum_{j< k} a_{j,k}^2 \frac{1}{\lambda_{j,k}}
\]
where 
\[
a_{j,k}^2 = \left\{\begin{array}{ll}
                   \frac{4^2 L^2}{\pi^4}\frac{1}{j^2k^2} & \textup{ if } j \textup{ even and } k \textup{ odd or } j \textup{ odd and } k \textup{ even } \\
                   0 & \textup{ else. }
                   \end{array} \right.
\]
Writing $F(j,k)= \frac{1}{j^2k^2}\frac{1}{j^2+k^2}$ we have
\[
T(\Omega) =  \frac{4^2L^4}{\pi^6} \left( \sum_{k\textup{ even}} \sum_{\stackrel{j< k}{j \textup{ odd}} } F(j,k) + \sum_{k\textup{ odd}} \sum_{\stackrel{j< k}{j \textup{ even}} } F(j,k) \right). 
\]
Because the coefficient $a_{j,k}^2$ are only nonzero for positive integer pairs with different parity and the expression being summed is symmetric in $k$ and $j,$ we have
\begin{eqnarray*}
T(\Omega) & = & \frac12 \frac{4^2L^4}{\pi^6} \left( \sum_{k\textup{ even}} \sum_{j \textup{ odd}}  F(j,k)  +  \sum_{k\textup{ odd}} \sum_{j \textup{ even}}  F(j,k) \right).  \label{Eqn:IR4} 
\end{eqnarray*}

The sum $ {\mathcal S} = \sum_{k\textup{ even}} \sum_{j \textup{ odd}}  F(j,k)  +  \sum_{k\textup{ odd}} \sum_{j \textup{ even}}  F(j,k)$ is a sum over a subset of the lattice of positive integer pairs: those pairs with integers of different parity.  We sum over the entire lattice and throw out the integer pairs with the same parity:
\begin{eqnarray*}
{\mathcal S} & = &\sum_{j,k \geq 1}  F(j,k)  - \sum_{j, k\textup{ even}}   F(j,k) -  \sum_{j, k\textup{ odd}}  F(j,k) \\
 & = & S_a-S_e-S_o.
 \end{eqnarray*}
 
We compute each sum.
 
\begin{eqnarray}
S_a & = & \sum_{k\geq 1} \frac{1}{k^4}\sum_{j\geq 1} \frac{1}{j^2} \frac{1}{1+j^2(1/k)^2} \nonumber \\ 
 &=& \sum_{k\geq 1} \frac{1}{k^4}\sum_{j\geq 1} \frac{1}{j^2} + \frac{-(1/k)^2}{1+j^2(1/k)^2} \nonumber \\
 &=& \zeta(4)\zeta(2) - \sum_{k\geq 1} \frac{1}{k^6}\sum_{j\geq 1} \frac{1}{1+j^2(1/k)^2} \nonumber \\
  &=& \zeta(4)\zeta(2) - \sum_{k\geq 1} \frac{1}{k^6} \left(-\frac12 +\frac{\pi \coth(k\pi)}{2(1/k)} \right) \nonumber \\
   &=& \zeta(4)\zeta(2) +\frac12\zeta(6) - \frac{\pi}{2}\sum_{k\geq 1} \frac{1}{k^5} \coth(k\pi). \label{Eqn:S}
\end{eqnarray}

To compute $S_e,$ note that $F(2j,2k) = \frac{1}{2^6}F(j,k):$
\begin{equation}\label{Eqn:Se}
S_e = \frac{1}{2^6} S_a.
\end{equation}

Finally, we compute $S_o:$
\begin{eqnarray}
S_o & = & \sum_{k \textup{ odd}} \frac{1}{k^4}\sum_{j\textup{ odd}} \frac{1}{j^2} + \frac{-(1/k)^2}{1+j^2(1/k)^2} \nonumber \\ 
 &=& \left(1-\frac{1}{2^4}\right)\left(1-\frac{1}{2^2}\right)\zeta(4)\zeta(2) - \sum_{k\textup{ odd}} \frac{1}{k^6}\sum_{j\textup{ odd}} \frac{1}{1+j^2(1/k)^2} \nonumber \\
  &=& \left(1-\frac{1}{2^4}\right)\left(1-\frac{1}{2^2}\right)\zeta(4)\zeta(2) - \sum_{k\textup{ odd}} \frac{1}{k^6} \left(-\frac12 + \frac{\pi \coth(k\pi)}{2(1/k)}\right)  \nonumber \\
  &  &  + \sum_{k\textup{ odd}} \frac{1}{k^6} \left( -\frac12 + \frac{\pi \coth\left(\frac{k\pi}{2}\right)}{4(1/k)} \right)   \nonumber \\
 &=&C - \frac{\pi}{2} \sum_{k\textup{ odd}} \frac{1}{k^5} \coth(k\pi) +  \frac{\pi}{4} \sum_{k\textup{ odd}} \frac{1}{k^5} \coth\left(\frac{k\pi}{2}\right) \label{Eqn:So}
\end{eqnarray}
where $C= \left(1-\frac{1}{2^4}\right)\left(1-\frac{1}{2^2}\right)\zeta(4)\zeta(2).$  Let $Z_t$ be defined by
\begin{eqnarray}
Z_t& =& \left(\left(1-\frac{1}{2^6}\right) -  \left(1-\frac{1}{2^4}\right)\left(1-\frac{1}{2^2}\right)\right)\zeta(4)\zeta(2)  \nonumber \\
 &  & +\frac12 \left(1-\frac{1}{2^6}\right)\zeta(6) \nonumber\\
 &=& \left(\frac{1}{15}\right)\frac{\pi^6}{2^6}.\label{Eqn:ZIsos}
\end{eqnarray}
\normalsize
Using (\ref{Eqn:S}), (\ref{Eqn:Se}), and (\ref{Eqn:So}) we have:
\begin{multline}
{\mathcal S}  =  Z_t -\frac{\pi}{2}\left(1-\frac{1}{2^6}\right)\sum_{k\geq 1} \frac{1}{k^5} \coth(k\pi)  \\
+\frac{\pi}{2} \sum_{k\textup{ odd}} \frac{1}{k^5} \coth(k\pi)   - \frac{\pi}{4} \sum_{k\textup{ odd}} \frac{1}{k^5} \coth\left(\frac{k\pi}{2}\right). \nonumber
\end{multline}
\normalsize
Since
\[
\sum_{k\textup{ odd}} \frac{1}{k^5} \coth(k\pi)  = \sum_{k\geq 1} \frac{1}{k^5} \coth(k\pi) - \frac{1}{2^5} \sum_{k\geq 1} \frac{1}{k^5} \coth(2k\pi),  
\]
we have
\begin{multline}
-\left(1-\frac{1}{2^6}\right) \sum_{k\geq 1} \frac{1}{k^5} \coth(k \pi) + \sum_{k\textup{ odd}} \frac{1}{k^5} \coth(k\pi)  = \\ \frac{1}{2^6} \sum_{k\geq 1} \frac{1}{k^5} \left( \coth(k\pi) - 2\coth(2k\pi)\right). \nonumber
\end{multline}
Using the identities
\begin{eqnarray*}
\coth(\theta) & = & \coth(2\theta) + \csch(2\theta) \\
\tanh(\theta) &=& \coth(2\theta) - \csch(2\theta)
\end{eqnarray*}
we have\footnote{There are approximations for the torsional rigidity of a right isosceles triangle in the literature; see for example \cite{YB1}.} 
\begin{proposition}\label{Thm:TRIsosceles}Let $\Omega$ be an isosceles right triangle of side length $L.$  Then
\begin{equation}\label{Eqn:TRIsosceles}
T(\Omega) = \left(\frac12 \right) \frac{4^2L^4}{\pi^6} \left(Z_t -\frac{\pi}{2}\frac{1}{2^6} \sum_{k\geq 1}\frac{1}{k^5} \tanh(k\pi) -  \frac{\pi}{4}\sum_{k \textup{ odd}} \frac{1}{k^5} \coth\left(\frac{k\pi}{2}\right) \right)
\end{equation}
where 
\[
Z_t = \left(\frac{1}{15} \right) \frac{\pi^6}{2^6}. 
\]
\end{proposition}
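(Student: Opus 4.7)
Nearly all of the work has already been assembled in the paragraphs preceding the statement: the Green's-function expansion reduces $T(\Omega)$ to $\tfrac12\cdot (4^2 L^4/\pi^6)\,\mathcal{S}$, the lattice sum $\mathcal{S}$ has been split as $S_a - S_e - S_o$, and each of $S_a,\,S_e,\,S_o$ has been evaluated in (\ref{Eqn:S}), (\ref{Eqn:Se}), (\ref{Eqn:So}). The plan is simply to substitute these three expressions into $\mathcal{S}$ and to rewrite the resulting hyperbolic piece in the form demanded by (\ref{Eqn:TRIsosceles}).

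Carrying the substitution out separates $\mathcal{S}$ into a pure Riemann-zeta part and a purely hyperbolic part. The zeta part collects the $\zeta(4)\zeta(2)$ and $\zeta(6)$ contributions from $(1-2^{-6})S_a$ and from $S_o$; matching these against (\ref{Eqn:ZIsos}) shows at once that they sum to $Z_t$. The hyperbolic part contains three $\coth$ series: one over all $k\geq 1$ evaluated at $k\pi$ (coming from $(1-2^{-6})S_a$), one over odd $k$ evaluated at $k\pi$ (from $S_o$), and one over odd $k$ evaluated at $k\pi/2$ (also from $S_o$). Using the even/odd decomposition displayed just above the proposition,
\[
\sum_{k\text{ odd}}\frac{\coth(k\pi)}{k^5} \;=\; \sum_{k\geq 1}\frac{\coth(k\pi)}{k^5} - \frac{1}{2^5}\sum_{k\geq 1}\frac{\coth(2k\pi)}{k^5},
\]
the first two of these collapse into a single sum of the form $\tfrac{\pi}{2^{7}}\sum_{k\geq 1}k^{-5}\bigl(\coth(k\pi) - 2\coth(2k\pi)\bigr)$.

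The one non-mechanical step is the simplification of $\coth\theta - 2\coth 2\theta$. Subtracting the two identities listed immediately before the proposition, $\coth\theta = \coth 2\theta + \csch 2\theta$ and $\tanh\theta = \coth 2\theta - \csch 2\theta$, yields $\coth\theta - 2\coth 2\theta = -\tanh\theta$. Applied at $\theta = k\pi$, this converts the combined $\coth(k\pi)$ series into the advertised $\tanh(k\pi)$ series, while the $\coth(k\pi/2)$ sum over odd $k$ is already in the form claimed. Multiplying $\mathcal{S}$ through by $\tfrac12\cdot 4^2 L^4/\pi^6$ then reproduces (\ref{Eqn:TRIsosceles}); the only real obstacle is spotting and applying that single hyperbolic identity, and the remainder is bookkeeping.
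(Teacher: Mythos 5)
Your proposal is correct and follows essentially the same route as the paper: substitute the evaluations (\ref{Eqn:S}), (\ref{Eqn:Se}), (\ref{Eqn:So}) into ${\mathcal S}=S_a-S_e-S_o$, match the zeta contributions with $Z_t$ via (\ref{Eqn:ZIsos}), collapse the two $\coth(k\pi)$ series using the odd/even splitting, and apply the hyperbolic identities to rewrite $\coth\theta-2\coth 2\theta$ as $-\tanh\theta$. The key identity you isolate is exactly the one the paper uses (implicitly) in its final step, so no gap remains.
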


\section{Proof of Theorem \ref{Thm:DifferentTorsion}}

\begin{figure}
\includegraphics[width=3in,height=3in]{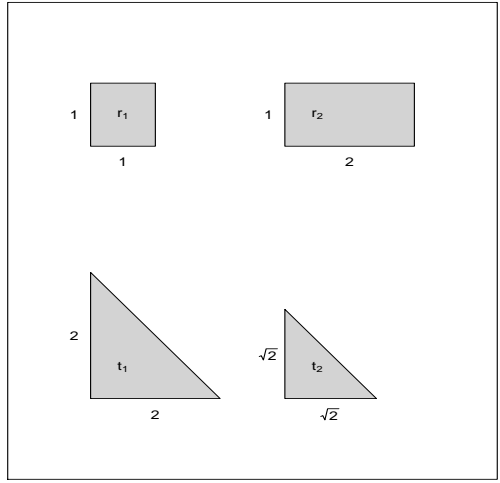}
\caption{Chapman domains $C_1$ and $C_2$ are disjoint unions of a rectangle and a triangle: $C_i = r_i \cup t_i$.}
\label{Fig:Chapman}
\end{figure}

As in figure \ref{Fig:Chapman}, define disjoint planar polygons 
\begin{eqnarray*}
t_1 &=& \textup{ right isosceles triangle of side length } 2 \\
t_2 &=& \textup{ right isosceles triangle of side length } \sqrt{2} \\
r_1 &=& \textup{ square of side length 1} \\
r_2 &=& \textup{ rectangle of length } 2 \textup{ and height } 1.
\end{eqnarray*}

Let $C_1 $ be the disjoint union defined by $C_1 = r_1 \cup t_1$ and let $C_2$ be the disjoint union $C_2=r_2 \cup t_2.$  Because the polygons comprising $C_1$ are disjoint, the Dirichlet spectrum of $C_1$ is a union of the Dirichlet spectra of $r_1$ and $t_1,$ and similarly for $C_2$ (see \cite{Ch1}).  We compute the difference $T(C_1)-T(C_2).$  Using Proposition \ref{Thm:TRIsosceles}, 
\[
T(t_1) -T(t_2) = \left(2^4 -(\sqrt{2})^4\right) \frac{4^2}{2\pi^6} \tau
\]
where 
\[
\tau = \left(\frac{1}{15}\right) \frac{\pi^6}{2^6} -\frac{\pi}{2}\frac{1}{2^6} \sum_{k\geq 1}\frac{1}{k^5} \tanh(k\pi) -  \frac{\pi}{4}\sum_{k \textup{ odd}} \frac{1}{k^5} \coth\left(\frac{k\pi}{2}\right).
\]
Doing the arithmetic:
\begin{equation}\label{Eqn:triang}
T(t_1) -T(t_2) = \frac{1}{10} - \frac{3}{4\pi^5} \sum_{k\geq 1}\frac{1}{k^5} \tanh(k\pi) -\frac{3\cdot 2^3}{\pi^5}\sum_{k \textup{ odd}} \frac{1}{k^5} \coth\left(\frac{k\pi}{2}\right).
\end{equation}
Using Proposition \ref{Thm:FirstMomentR},
\begin{equation}\label{Eqn:rect}
T(r_1) -T(r_2) =  -\frac{1}{12} - \frac{4^2}{\pi^5} \sum_{k \textup{ odd}} \frac{1}{k^5} \left( \tanh\left(\frac{k\pi}{2}\right) -\tanh\left(\frac{k\pi}{4}\right)    \right).
\end{equation}
From (\ref{Eqn:triang}) and (\ref{Eqn:rect}) we have
\[
T(C_1)-T(C_2) = \frac{1}{60} - D 
\]
where $D$ is the sum of the terms involving series, each of which has positive terms.  Note that for $x>0,$ $\coth(x)$ is decreasing and bounded below by 1.  Thus,
\begin{eqnarray*}
\sum_{k \textup{ odd}} \frac{1}{k^5} \coth\left(\frac{k\pi}{2}\right) &=& \sum_{k\geq 1} \frac{1}{k^5} \coth\left(\frac{k\pi}{2}\right) - \frac{1}{2^5}\sum_{k \geq 1} \frac{1}{k^5} \coth\left(k\pi \right) \\
&>& \sum_{k\geq 1} \frac{1}{k^5} \coth\left(\frac{k\pi}{2}\right) - \frac{1}{2^5}\sum_{k \geq 1} \frac{1}{k^5} \coth\left(\frac{k\pi}{2} \right) \\
&>& \left(1-\frac{1}{2^5}\right)\zeta(5).
\end{eqnarray*}
Thus, since $\zeta(5) >1,$
\begin{eqnarray*}
T(C_1)-T(C_2) &<& \frac{1}{60} -\frac{3\cdot 2^3}{\pi^5}\sum_{k \textup{ odd}} \frac{1}{k^5} \coth\left(\frac{k\pi}{2}\right) \\
 &<& \frac{1}{60} -\frac{24}{\pi^5}\left(\frac{31}{32}\right) \\
 & <& 0,
\end{eqnarray*}
which proves Theorem \ref{Thm:DifferentTorsion}.

\begin{corollary}The Chapman domains $C_1$ and $C_2$ are distinguished by heat content.
\end{corollary}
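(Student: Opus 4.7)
The corollary is essentially an immediate consequence of Theorem \ref{Thm:DifferentTorsion} together with the integral relationship between torsional rigidity and heat content already cited in the introduction. My plan is to invoke this relationship and argue by contrapositive, rather than redo any heat-content calculations.

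First, I would recall the heat-content function $H_\Omega(t) = \int_\Omega u(x,y,t)\, dxdy$, where $u$ solves the heat equation on $\Omega$ with initial data identically $1$ and Dirichlet boundary conditions. Expanding $u$ in the $L^2$-normalized Dirichlet eigenbasis $\{\phi_n\}$ with eigenvalues $\{\lambda_n\}$ gives
\[
H_\Omega(t) = \sum_n a_n^2\, e^{-\lambda_n t}, \qquad a_n = \int_\Omega \phi_n(x,y)\,dxdy,
\]
and integrating in $t$ from $0$ to $\infty$ (which converges on a bounded domain since $\lambda_1>0$) yields
\[
\int_0^\infty H_\Omega(t)\,dt = \sum_n \frac{a_n^2}{\lambda_n} = T(\Omega),
\]
which is precisely the moment identification referenced via \cite{MM1} and used in the discussion surrounding Theorem \ref{Thm:DifferentTorsion}. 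This holds in particular for the Chapman domains, for which the eigenfunctions and eigenvalues decompose as disjoint unions of the corresponding data on the constituent rectangle and triangle.

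Now I argue by contrapositive. If $H_{C_1}(t) = H_{C_2}(t)$ for every $t \geq 0$, then integrating over $t \in (0,\infty)$ gives $T(C_1) = T(C_2)$. Theorem \ref{Thm:DifferentTorsion} states that $T(C_1) \neq T(C_2)$, so there must exist some $t \geq 0$ at which $H_{C_1}(t) \neq H_{C_2}(t)$. Hence $C_1$ and $C_2$ are distinguished by heat content.

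Since the proof reduces to one line once the moment identity is in hand, there is no real obstacle; the only mild subtlety is pointing to the integral representation of $T(\Omega)$ in terms of $H_\Omega(t)$ and confirming integrability at infinity, both of which are standard and available in \cite{MM1}. I might also add a brief remark that this furnishes an independent recovery of the heat-content distinction from \cite{BDK} via a purely stationary (Poisson) computation.
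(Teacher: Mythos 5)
Your argument is correct and is essentially the paper's own proof: the paper likewise argues by contrapositive from the identity that torsional rigidity is the first moment of heat content (citing \cite{MM1} rather than rederiving it via the eigenfunction expansion as you do). No further comment needed.
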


\begin{proof} Torsional rigidity is the first moment of heat content (see \cite{MM1}).  Thus, if $C_1$ and $C_2$ have the same heat content, they must have the same torsional rigidity.

\end{proof}

\end{document}